\newtheorem{thm}{Theorem}[section]
\newtheorem{cor}[thm]{Corollary}
\newtheorem{lem}[thm]{Lemma}
\newtheorem{prop}[thm]{Proposition}
\theoremstyle{definition}
\newtheorem{defin}[thm]{Definition}
\newtheorem{rem}[thm]{Remark}
\newtheorem{exa}[thm]{Example}
\numberwithin{equation}{section}
\begin{document}

\baselineskip=17pt

\title[Periodic points for amenable group actions]{Periodic points for amenable group actions on
uniquely arcwise connected continua}

\author{Enhui Shi\ \ \&\  \ Xiangdong Ye}

\address[E.H. Shi]{School of Mathematical Sciences, Soochow University, Suzhou 215006, P. R. China}
\email{ehshi@suda.edu.cn}

\address[X. Ye]
{Wu Wen-Tsun Key Laboratory of Mathematics, USTC, Chinese Academy of Sciences and Department of Mathematics, University of Science and
Technology of China, Hefei, Anhui 230026, China}
\email{yexd@ustc.edu.cn}

\begin{abstract}
 We show that any action of a countable amenable group on a uniquely arcwise connected continuum has a periodic point of
order $\leq 2$.
\end{abstract}
\keywords{amenable group; periodic
point; dendrite; continuum.}
\subjclass[2010]{Primary
37B05; Secondary 54F50.}
\maketitle

\section{Introduction}

 In this section we will give some basic notions we shall use in the paper. Moreover, we will state the backgrounds and the main theorem.

\subsection{Basic notions}
 Let $X$ be a topological space and let $G$ be a group.
A homomorphism $\phi $ of $G$ into the semigroup of all continuous self-mappings of $X$ is called an {\it
action} of $G$ on $X$. (Note that each $\phi (g)$ is a homeomorphism of $X$ with a continuous inverse $\phi (g^{-1})$.)
For brevity, we usually use $gx$ or $g(x)$ instead of $\phi(g)(x)$ for $g\in G$ and $x\in X$. The {\it orbit}
of $x\in X$ under the action of $G$ is the set $Gx\equiv\{gx:g\in
G\}$. If $Gx$ is finite then $x$ is called a {\it periodic point} of $\phi$
and the cardinality $n$ of $Gx$ is called the {\it order} of $x$; we also say that $x$ is an {\it $n$-periodic point}
 (resp. a {\it fixed point} if $n=1$,  i.e. if $gx=x$ for all $g\in G$).  A subset $Y$ of $X$ is called {\it
$G$-invariant}, if $g(Y)\subset Y$ for all $g\in G$. A Borel measure
$\mu$ on $X$ is called {\it $G$-invariant} if $\mu(g(A))=\mu(A)$ for
every Borel set $A$ in $X$ and every $g\in G$.

{\it Amenability} was first introduced by von Neumann. Recall that a group $G$ is called an {\it amenable group} if there is a
sequence of finite sets $F_i$ ($i=1, 2, 3,\ \dots$) with $\cup_{i=1}^\infty F_i=G$, such that
$\lim\limits_{i\to\infty}\frac{|gF_i\bigtriangleup F_i|}{|F_i|}=0$
for every $g\in G$, where $|F_i|$ is the number of elements in
$F_i$; the set $F_i$ is called a {\it F{\o}lner set}. It is well
known that solvable groups and finite groups are amenable; every
subgroup of an amenable group is amenable. It is also known that any
group containing a free noncommutative subgroup is not amenable. An
important characterization of countable amenable group is that $G$
is amenable if and only if every action of $G$ on a compact metric
space $X$ has a $G$-invariant Borel probability measure on $X$. One
may consult \cite{Pa} for a systematic introduction to amenability.

By a {\it continuum}, we mean a connected compact metric space. A
continuum is {\it nondegenerate} if it is not a single point.  An
{\it arc} is a continuum which is homeomorphic to the closed
interval $[0, 1]$. A continuum $X$ is {\it uniquely arcwise
connected} if for any two points $x\not=y\in X$ there is a unique
arc $[x, y]$ in $X$, which connects $x$ and $y$. A {\it dendrite} is
a locally connected, uniquely arcwise connected continuum. A {\it
tree} is a dendrite which is the union of finitely many arcs.
Clearly, the class of uniquely arcwise connected continua is
strictly larger than that of dendrites. For example, the Warsaw
circle is uniquely arcwise connected but not locally connected.

We provide in the following an example of a uniquely arcwise
connected continuum contained in the plane, which will be repeatedly
mentioned throughout the paper.

\begin{exa}[see Fig.1]
Let $\mathbb R^2$ be the Euclidean  plane.  For each positive integer $n$, let
$I_n$, $I_{-n}$, $J_n$, $J_{-n}$ be the segments between
$(\frac{n-1}{n}, 0)$ and $(\frac{n}{n+1}, 1)$, between
$(-\frac{n-1}{n}, 0)$ and $(-\frac{n}{n+1}, 1)$, between
$(\frac{n}{n+1}, 0)$ and $(\frac{n}{n+1}, 1)$, between
$(-\frac{n}{n+1}, 0)$ and $(-\frac{n}{n+1}, 1)$, respectively. Let
$S^{-}=\cup_{n=1}^{\infty}(I_{-n}\cup J_{-n})$ and
$S^{+}=\cup_{n=1}^{\infty}(I_{n}\cup J_{n})$. Let $L$, $B$, $R$, $M$
be the segments between $(-1,-1)$ and $(-1, 1)$, between
$(-1,-1)$ and $(1, -1)$, between $(1, -1)$ and $(1, 1)$, between
$(0, -1)$ and $(0, 0)$, respectively. Let $X=L\cup B\cup R\cup
M\cup S^- \cup S^+$. Then $X$ is a uniquely arcwise connected
continuum which is not locally connected.
\end{exa}

\begin{figure}[htbp]
\centering
\includegraphics[scale=0.3]{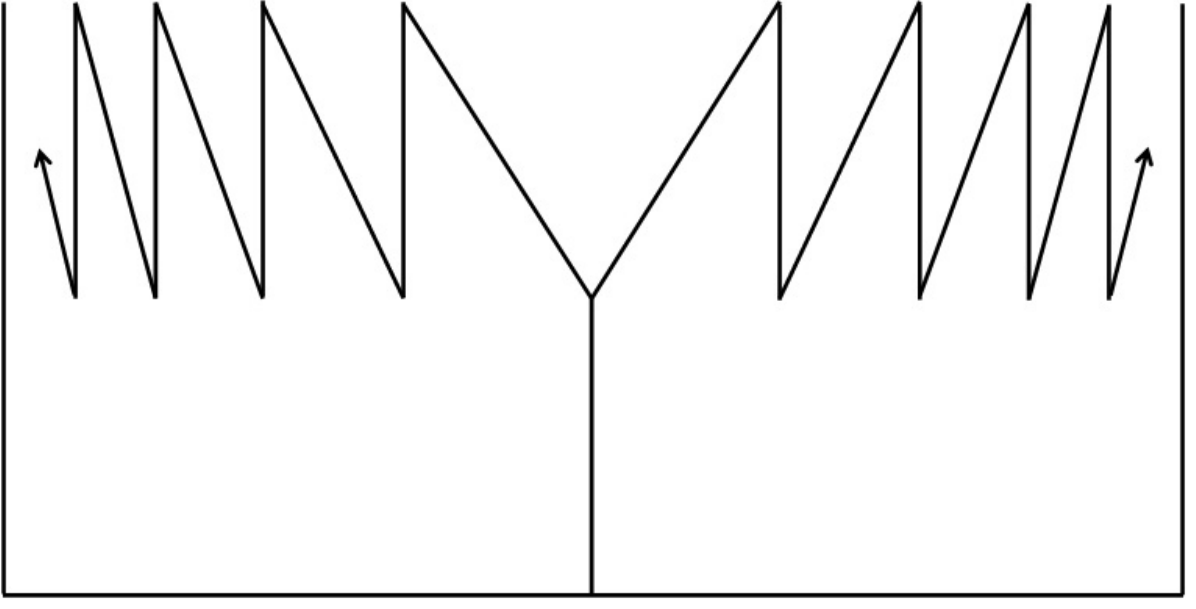}\ \ \ \ \ \ \ \ \ \ \ \ \ \ \ \
\centerline{Fig.1}
\end{figure}

\subsection{Backgrounds and the main theorem}
For an action of a group $G$ on a topological space $X$, an
interesting question is whether there exists a fixed point or a
periodic point of the action. The answer to this question certainly depends on
the topology of $X$ and involves the algebraic structure of $G$.

 In 1975, Mohler proved in \cite{Moh} that every
homeomorphism (i.e., $\mathbb Z$-action) on a uniquely arcwise
connected continuum has a fixed point, which answered a question
proposed by Bing (see \cite{Bi}). In 2009, this result is
generalized to nilpotent group actions by Shi and Sun  (see
\cite{SS}). In 2010, Shi and Zhou further showed that every solvable
group action on such continua has a periodic point of order $\leq 2$ (see \cite{SZ}). 1n 2016, Shi and Ye proved that
every countable amenable group action on a dendrite has a periodic point of order $\leq 2$  (see \cite{SY}). One may
consult \cite{Ho, Ma1, Ma2} for some interesting discussions about
fixed point theory for mappings on uniquely arcwise connected
continua. We also remark that a continuous map on a uniquely arcwise
connected continuum may have no fixed points (see \cite{Yo}).

 We get the following theorem in this paper, which generalizes all
 the corresponding results stated above. (Noting that the integer group is amenable, we see by the following theorem that
 any integer group action on a uniquely arcwise connected continuum preserves either a point or an arc, which implies the existence of fixed points.)

\begin{thm}\label{main-th}
Any action of a countable amenable group on a uniquely arcwise connected continuum has a periodic point of
order $\leq 2$.
\end{thm}

We should note that the set of end points and the set of branch points of a uniquely arcwise connected continuum $X$ are each homeomorphism invariants.
So, if the set of end points of $X$ is finite, or if the set of branch points of $X$ is finite but non-empty, then any group action on $X$ admits periodic points,
which easily implies the existence of a periodic point of order $\leq 2$. However, a uniquely arcwise connected continuum may have infinitely many branch points
and even uncountably many end points. This complicates the problem. We also remark that if the acting group $G$ is the free group $\mathbb Z*\mathbb Z$ (not amenable),
then there does exist counterexamples of $G$ actions on some dendrites without periodic points (see \cite[Theorem 6.1]{SY}); if $G$ is the solvable group
$(\mathbb Z/2\mathbb Z)\ltimes\mathbb Z$, then $G$ can act on the closed interval $[0,1]$ with a periodic point of order $2$ and with no fixed points (see \cite[Remark 1.3]{SZ}).

The proofs of the main theorems in \cite{SS} and \cite{SZ} heavily rely on the existence of a decreasing sequence
of commutator subgroups ending at the identity, which reduces the proof to the case of abelian group actions; 
however, such subgroup sequences do not exist in amenable groups in general.
So, we have to develop some new techniques to overcome this difficulty.

 In Section 2, we introduce some  notions and results concerning
the structure and mapping properties of uniquely arcwise connected
continua. In Section 3, we construct a convex metric on a special
class of arcwise connected subsets of uniquely arcwise connected
continua and study their completions with respect to this metric.
Specially, we establish a connection between the group actions on
dendrites and that on uniquely arcwise connected continua. Based on
the connection established in Section 3 and the main theorem in
\cite{SY}, we prove Theorem 1.2 in Section 4.

\section{Preliminaries}
{In this section we will introduce several notions which are important for the
study of our question, including the convex hulls, dendrites, rays and lines in a
compact metric space, and the quasi-retractions.}

\subsection{Convex hulls}

Let $X$ be a uniquely arcwise connected continuum.  If $S$ is a
subset of $X$, we denote by $[S]$ the intersection of all arcwise
connected subsets containing $S$, and call it the {\it convex hull}
of $S$ in $X$. Clearly, $[S]$ is the minimal one among all the
arcwise connected subsets which contain $S$. We remark here that
$[S]$ need not be compact in general. Denote by $[a, b]$ the  unique
arc in $X$ connecting $a$ and $b$ and by $[a, b), (a, b]$, and
$(a, b)$ the sets $[a,b]-\{b\}$, $[a,b]-\{a\}$, and $[a, b]-\{a,
b\}$, respectively.

The following lemma is clear.

\begin{lem}
If $S$ is a finite set in a uniquely arcwise connected continuum
$X$, then $[S]$ is a tree.
\end{lem}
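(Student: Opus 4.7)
The plan is to proceed by induction on $n = |S|$. For $n = 1$ the set $[S]$ is a single point and for $n = 2$ it is the unique arc joining the two points of $S$; both are trivially trees. So the content lies in the inductive step.

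For the inductive step, write $S = \{x_1, \ldots, x_{n+1}\}$ and set $T = [\{x_1, \ldots, x_n\}]$, which is a tree by the induction hypothesis and in particular a compact subset of $X$. I would then look at the unique arc $\alpha = [x_{n+1}, x_1]$ in $X$ and at the set $\alpha \cap T$, a nonempty closed subset of $\alpha$. Parametrizing $\alpha$ from $x_{n+1}$ to $x_1$, let $p$ be the point of $\alpha \cap T$ with smallest parameter; then $[x_{n+1}, p] \cap T = \{p\}$. The central claim is
\[
[S] = T \cup [x_{n+1}, p].
\]
The inclusion $[S] \subseteq T \cup [x_{n+1}, p]$ follows because the right side is arcwise connected (the two pieces share $p$) and contains $S$, so it contains the minimal such set $[S]$. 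For the reverse inclusion, any arcwise connected set containing $S$ must contain $T$ (by minimality applied to $\{x_1, \ldots, x_n\}$) and must also contain an arc from $x_{n+1}$ to $x_1$; by unique arc-connectedness of $X$, that arc is forced to be $\alpha$ itself and hence contains $[x_{n+1}, p]$.

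Once the decomposition above is established, verifying that $[S]$ is a tree is routine. Since $T$ is already a finite union of arcs, so is $[S]$. It is compact as a finite union of compacta in $X$, connected because it is arcwise connected, and uniquely arcwise connected since this property is inherited from $X$. For local connectedness, at any point other than $p$ a connected neighborhood in $T$ or in $[x_{n+1}, p]$ does the job, while at $p$ one glues small connected neighborhoods from the two pieces, using that $T \cap [x_{n+1}, p] = \{p\}$ and both are locally connected.

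The main obstacle is pinning down the attaching point $p$ and establishing the decomposition $[S] = T \cup [x_{n+1}, p]$: this is the step where the ambient unique arc-connectedness is genuinely used, since it forces every arc that an arcwise connected superset of $S$ draws from $x_{n+1}$ back to $T$ to share the initial segment $[x_{n+1}, p]$. Once this recursive structure is in hand, every property required of a tree follows by straightforward induction.
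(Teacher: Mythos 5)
Your proof is correct: the induction on $|S|$, the choice of the first point $p$ of $[x_{n+1},x_1]\cap T$ along the arc from $x_{n+1}$, and the identity $[S]=T\cup[x_{n+1},p]$ (using minimality for one inclusion and unique arcwise connectedness to force the connecting arc for the other) give exactly the natural argument; the paper itself offers no proof, stating the lemma as clear, so you are simply supplying the details it omits. The only cosmetic points are the degenerate cases --- when $x_{n+1}\in T$ the segment $[x_{n+1},p]$ collapses to the point $p$ and $[S]=T$, and when $|S|=1$ the hull is a single point, a degenerate tree --- neither of which affects the argument.
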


\begin{exa} In Example 1.1, $[S^-]=S^-$ is not compact; If $S$ is
the finite set consisting of points $(-1, 1)$, $(1, 1)$, and
$(0,0)$, then $[S]=L\cup B\cup R\cup M$, which is a tree.

\end{exa}

\subsection{Dendrites} Let $X$ be a dendrite and let $x\in X$. We
use ${\rm ord}(x, X)$ to denote the cardinality of the set of all
components of $X-\{x\}$, which is called the {\it order of $x$ in
$X$}. The point $x$ is a {\it cut point} if ${\rm ord}(x, X)= 2$;
is a {\it branch point} if ${\rm ord}(x, X)\geq 3$; and is an {\it end
point} if ${\rm ord}(x, X)=1$. For a nondegenerate dendrite $X$,
there are at most countably many branch points, there are
uncountably many cut points, and there always exist end points. One
may consult \cite{Na} for more properties of dendrites.

\begin{prop}
Let $f$ be a homeomorphism on a nondegenerate dendrite $X$. Suppose
$e$ is an end point of $X$ such that $f(e)=e$. Then there is $u\in X,
u\not= e$, such that either $f([e, u])\subset [e, u]$ or $f^{-1}([e,
u])\subset [e, u]$. Moreover, there is $u'\in (e,u]$ such that
$[e,u']\cup f([e,u'])\cup f^{-1}([e,u'])\subset [e,u].$
\end{prop}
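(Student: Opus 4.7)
The approach is proof by contradiction. Since $X$ is uniquely arcwise connected and $f$ is a homeomorphism fixing $e$, the image $f([e,u])$ must coincide with the unique arc $[e,f(u)]$ for every $u$. Hence $f([e,u])\subset[e,u]$ iff $f(u)\in[e,u]$, and $f^{-1}([e,u])\subset[e,u]$ iff $u\in[e,f(u)]$. Introduce the tree order based at $e$: $x\preceq y$ iff $x\in[e,y]$. The negation of the conclusion reads: for every $u\neq e$, the points $u$ and $f(u)$ are $\preceq$-incomparable, and this is what I aim to refute.

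Assume this incomparability and define $\phi(u)=u\wedge f(u)$, the unique point with $[e,u]\cap[e,f(u)]=[e,\phi(u)]$. The plan is to establish four properties: (a) $\phi(u)\prec u$ and $\phi(u)\prec f(u)$ strictly, which is immediate from incomparability; (b) $\phi(u)\neq e$ for $u\neq e$, because otherwise any $u'\in(e,u]$ and $v'\in(e,f(u)]$ would satisfy $u'\wedge v'=e$, so the arc $[u',v']$ passes through $e$, putting $u'$ and $v'$ into distinct components of $X\setminus\{e\}$ (using the standard fact that any connected subset of a dendrite contains the arc between its points) and contradicting ${\rm ord}(e,X)=1$; (c) $\phi(u)$ is a branch point of $X$, since three distinct directions emanate from it (toward $e$, toward $u$, and toward $f(u)$), so ${\rm ord}(\phi(u),X)\geq 3$; (d) $\phi:X\setminus\{e\}\to X$ is continuous, by continuity of the meet operation on a dendrite.

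The endgame is a short connectedness argument. Since ${\rm ord}(e,X)=1$, the space $X\setminus\{e\}$ is connected, hence so is its continuous image $\phi(X\setminus\{e\})$; by (c) this image lies in the set of branch points of $X$, which is at most countable in any dendrite. But a connected subset of a metric space containing at least two points must be uncountable (the distance from a fixed point sweeps out an interval), so $\phi$ is constant, $\phi\equiv p$, with $p\neq e$ by (b). Choosing any $u$ in the nonempty open arc $(e,p)$ then yields $\phi(u)\preceq u\prec p$, contradicting $\phi(u)=p$.

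The main obstacle is item (d), the continuity of the meet. I would derive it from the continuity of the arc function $(x,y)\mapsto[x,y]$ into the hyperspace, a standard property of dendrites (see \cite{Na}), combined with the characterisation of $\phi(u)$ as the endpoint of $[e,u]\cap[e,f(u)]$ distinct from $e$; alternatively, $\phi(u)$ equals the canonical retraction of $f(u)$ onto $[e,u]$, and one can invoke continuity of these retractions. Every other step is a direct consequence of the tree order and the defining property of an endpoint.
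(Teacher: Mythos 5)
Your argument is correct, but it takes a genuinely different and much heavier route than the paper's. The paper's proof is a two-line direct construction: fix any $v\not=e$, let $w$ be the point with $[e,w]=[e,v]\cap[e,f(v)]$ (here $w\not=e$ precisely because $e$ is an endpoint), and set $u=f^{-1}(w)$; since $w\in[e,f(v)]$ gives $u\in[e,v]$ and also $w\in[e,v]$, the two points $u,w$ lie on a common arc from $e$ and are therefore comparable, and each of the two cases $[e,u]\subset[e,w]$, $[e,w]\subset[e,u]$ immediately yields one of the two desired inclusions. Your contradiction argument instead shows that the meet $\phi(u)=u\wedge f(u)$ would have to be a nonconstant-impossible branch-point-valued map: steps (a)--(c) and the endgame (connected image inside the countable set of branch points, hence constant, then the contradiction on $(e,p)$) are all sound. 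The one soft spot is exactly the one you flag, step (d): continuity of the meet is the heaviest ingredient and is only sketched; it is indeed a standard dendrite fact (continuity of $(x,y)\mapsto[x,y]$ into the hyperspace, equivalently of the median map, cf. \cite{Na}), but note it genuinely uses local connectedness, so your argument is tied to dendrites in a way the paper's comparability trick is not. What your approach buys is a structural insight (any ``everywhere incomparable'' homeomorphism fixing an endpoint would force a constant branch point, which is absurd); what the paper's buys is brevity and elementarity, needing nothing beyond unique arcs, the endpoint property, and the fact that two points on one arc from $e$ are comparable.
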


\begin{proof}
Fix a point $v\not=e\in X$. Since $e$ is an end point and $f(e)=e$,
there is $w\not=e$ such that $[e, w]=[e, v]\cap [e, f(v)]$. Let
$u=f^{-1}(w)$. If $[e, u]\subset [e, w]$, then $f^{-1}([e,
u])\subset f^{-1}([e, w])=[e, u]$; if $[e, w]\subset [e, u]$, then
$f([e, u])=[e, w]\subset [e, u]$.

{ Moreover, if $f([e, u])\subset [e, u]$, we choose $u'\in (e,u]$
with $f^{-1}(u')=u$, and if $f^{-1}([e,u])\subset [e, u]$, we choose $u'\in (e,u]$
such that $f(u')=u$. Then $u'$ is the point we want.}
\end{proof}

The following two corollaries follow immediately from Proposition
2.3.

\begin{cor}
Let $f_1,...,f_n$ be homeomorphisms on a nondegenerate dendrite $X$. Suppose $e$ is an endpoint of $X$
such that $f_i(e)=e$ for all $i=1,...,n$. Then there are $u,
v\not=e\in X$  such that $f_i([e, v])\cup f_i^{-1}([e, v])\subset
[e, u]$ for all $i=1,...,n$.
\end{cor}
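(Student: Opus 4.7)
The plan is to proceed by induction on $n$. The base case $n=1$ follows from Proposition 2.3: if $f_1([e,u_1])\subset [e,u_1]$, then setting $v=f_1(u_1)$ and $u=u_1$ gives $f_1([e,v])\subset [e,u_1]$ and $f_1^{-1}([e,v])=[e,u_1]$. The other branch of the dichotomy is symmetric after interchanging $f_1$ and $f_1^{-1}$.

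For the inductive step, I would apply the hypothesis to $f_1,\ldots,f_{n-1}$ to obtain $u',v'\neq e$ with $f_i([e,v'])\cup f_i^{-1}([e,v'])\subset [e,u']$ for $i<n$. Replacing $v'$ with the branch point of $[e,v']$ and $[e,u']$ if needed (this branch point differs from $e$ because, for an endpoint $e$ of a dendrite, the intersection of any two arcs emanating from $e$ is itself a nontrivial subarc based at $e$), I may assume $v'\in [e,u']$. Then I apply Proposition 2.3 to $f_n$, running the construction in its proof with the starting point taken to be $v'$. The resulting point $u_n=f_n^{-1}(w_n)$ lies in $[e,v']$, and satisfies either $f_n([e,u_n])\subset [e,u_n]$ or $f_n^{-1}([e,u_n])\subset [e,u_n]$.

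In the first subcase set $v=f_n(u_n)$, and in the second set $v=f_n^{-1}(u_n)$. In either case $v\in [e,u_n]$, and a short computation using that $f_n^{\pm 1}$ are homeomorphisms fixing $e$ gives $f_n([e,v])\cup f_n^{-1}([e,v])\subset [e,u_n]\subset [e,v']\subset [e,u']$. Because $v\in [e,v']$, monotonicity also yields $f_i([e,v])\cup f_i^{-1}([e,v])\subset [e,u']$ for $i<n$. Taking $u=u'$ then completes the induction.

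The main subtlety is arranging that the point $v$ chosen to handle $f_n$ lies inside $[e,v']$, so that the inductive bounds for $f_1,\ldots,f_{n-1}$ persist under the same common arc $[e,u']$. This requires inspecting the construction inside the proof of Proposition 2.3 rather than merely invoking its statement, together with the elementary observation about endpoints of dendrites that two arcs based at $e$ always share a nontrivial initial subarc.
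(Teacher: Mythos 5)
Your argument is correct, and you have put your finger on the right subtlety: the \emph{statement} of Proposition 2.3 alone does not give the corollary, since the invariant arcs it produces for the separate $f_i$ need not lie in one common arc $[e,u]$; one has to control where the new arc sits, which you do by re-running the construction of Proposition 2.3 starting from $v'$ and observing that $u_n=f_n^{-1}(w_n)\in f_n^{-1}([e,f_n(v')])=[e,v']$, so that $[e,u_n]\subset[e,v']\subset[e,u']$ and the bounds for $f_1,\dots,f_{n-1}$ survive. The paper itself offers no written proof (it only asserts that the corollary ``follows immediately from Proposition 2.3''), so your induction is a legitimate way to fill the gap. There is, however, a shorter non-inductive argument in the same spirit, presumably the intended one: fix any $u\neq e$ and set $[e,v]=[e,u]\cap\bigcap_{i=1}^{n}\bigl([e,f_i(u)]\cap[e,f_i^{-1}(u)]\bigr)$; since $e$ is an endpoint and each $f_i$ fixes $e$, these are finitely many arcs emanating from $e$, so their intersection is a nondegenerate arc $[e,v]$ with $v\neq e$ (the same observation you use), and then $[e,v]\subset[e,f_i^{-1}(u)]=f_i^{-1}([e,u])$ gives $f_i([e,v])\subset[e,u]$ while $[e,v]\subset[e,f_i(u)]=f_i([e,u])$ gives $f_i^{-1}([e,v])\subset[e,u]$, for all $i$ simultaneously. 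Your route is sound but costs an induction and requires opening up the proof of Proposition 2.3; the direct intersection argument handles all $f_i^{\pm1}$ in one step using only the endpoint property.
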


\begin{cor}
Let $f$ be a homeomorphism on a nondegenerate dendrite $X$. Suppose
$e$ is an endpoint of $X$ such that $f(e)=e$. Then there is a
sequence $\{u_i\}_{i=1}^\infty$ in $X$  satisfying the following two
conditions simultaneously  (1) $[u_1, e]\supsetneq [u_2, e]\supsetneq[u_3,
e]\supsetneq...$ and $\cap_{i=1}^{\infty}[u_i, e]=\{e\};$ (2) either
$f([u_i, e])\subset [u_i, e]$ for all $i$, or $f^{-1}([u_i,
e])\subset [u_i, e]$ for all $i$.
\end{cor}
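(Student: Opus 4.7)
The plan is to iterate Proposition 2.3 along a sequence of points that converge to $e$ from inside the previously constructed arcs, and then apply the pigeonhole principle to the two alternative conclusions. First I would observe a sharpening of Proposition 2.3 that is already implicit in its proof: the point $u = f^{-1}(w)$ produced there satisfies $u \in [e, v]$ and $u \neq e$. The containment follows because $w \in [e, f(v)] = f([e, v])$, so $f^{-1}(w) \in [e, v]$; non-triviality is immediate from $w \neq e$ together with $f(e) = e$. In particular, $[e, u] \subset [e, v]$.

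For the inductive step, pick any $v_1 \neq e$ and let $\tilde u_1$ be the point given by Proposition 2.3 applied to $v_1$. Given $\tilde u_k \neq e$, parametrize the nondegenerate arc $[e, \tilde u_k]$ and choose $v_{k+1}$ on it with $\operatorname{diam}[e, v_{k+1}] < 1/k$; this is possible because the arc is homeomorphic to $[0,1]$ via a homeomorphism sending $e$ to $0$. Applying Proposition 2.3 to $v_{k+1}$ then produces $\tilde u_{k+1} \in [e, v_{k+1}] \subset [e, \tilde u_k]$ satisfying either $f([e, \tilde u_{k+1}]) \subset [e, \tilde u_{k+1}]$ or $f^{-1}([e, \tilde u_{k+1}]) \subset [e, \tilde u_{k+1}]$. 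By construction the arcs $[e, \tilde u_k]$ are nested with diameters tending to $0$, and hence $\bigcap_k [e, \tilde u_k] = \{e\}$.

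Finally, at every step one of the two alternatives is realized, so the pigeonhole principle gives an infinite $I \subset \mathbb{N}$ on which the same alternative holds; relabeling $(\tilde u_k)_{k \in I}$ as $\{u_i\}_{i \geq 1}$ yields the required sequence satisfying both (1) and (2). The delicate point is simultaneously maintaining nestedness and shrinkage: a naive application of Proposition 2.3 to an arbitrary sequence $v_n \to e$ only controls diameters, not inclusions, whereas forcing $v_{k+1}$ to lie on the previously obtained arc $[e, \tilde u_k]$ resolves this at the cost of a slightly more careful induction.
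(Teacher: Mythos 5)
Your proof is correct, and it follows the route the paper intends: the authors state that Corollary 2.5 ``follows immediately from Proposition 2.3,'' and your argument is precisely the natural filling-in, using the sharpening implicit in that proposition's proof (the point $u=f^{-1}(w)$ lies in $[e,v]\setminus\{e\}$, so $[e,u]\subset[e,v]$), choosing each new $v_{k+1}$ on the previously obtained arc to force nestedness and small diameter, and then passing to a subsequence by pigeonhole to fix one of the two alternatives.
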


Let $X$ and $Y$ be metric spaces and let $f:X\rightarrow Y$ be
continuous. If ${\rm diam}(f^{-1}(f(x)))\leq\epsilon$ for some
$\epsilon
>0$ and for every $x\in X$, then $f$ is called an {\it
$\epsilon$-map}. A continuum $X$ is {\it tree-like} provided that
for every $\epsilon>0$ there is an $\epsilon$-map $f_\epsilon$ from
$X$ onto some tree $Y_\epsilon$.

\begin{thm}[{\cite[Exercise 10.50]{Na}}]
 A locally connected continuum is tree-like if and only if it is a
 dendrite.
\end{thm}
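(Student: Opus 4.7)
The statement is an equivalence, so my plan is to handle the two directions separately.

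For the \emph{dendrite $\Rightarrow$ tree-like} direction, I would construct an explicit $\epsilon$-retraction onto a finite subtree for each $\epsilon > 0$. Given a dendrite $X$, pick a finite set $F \subset X$ whose $\epsilon/4$-neighborhoods cover $X$ (possible by compactness). Let $T = [F]$, which is a tree by Lemma 2.1. Fix any point $p \in T$ and define the first-point retraction $r : X \to T$ by taking $r(x)$ to be the unique point of $[x,p] \cap T$ nearest to $x$ along the arc $[x,p]$. Standard dendrite arguments give continuity of $r$, and the density of $F$ ensures that each fiber $r^{-1}(t)$ has diameter at most $\epsilon$, so $r$ is an $\epsilon$-map onto the tree $T$.

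For the \emph{locally connected $+$ tree-like $\Rightarrow$ dendrite} direction, I would factor the proof through hereditary unicoherence: a continuum is \emph{hereditarily unicoherent} if every two subcontinua $A, B$ with $A \cup B$ connected also have connected intersection $A \cap B$. The plan rests on two classical ingredients. First, a locally connected continuum is a dendrite if and only if it is hereditarily unicoherent; this is the standard characterization, following since a simple closed curve (a union of two arcs meeting only at their endpoints) manifestly violates unicoherence, while the converse uses local connectedness to construct unique arcs between any two points. Second, every tree-like continuum is hereditarily unicoherent, which I would prove by representing the tree-like continuum as an inverse limit of trees (a classical reformulation of tree-likeness), observing that trees are trivially hereditarily unicoherent, and that this property passes through inverse limits of continua.

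The main obstacle is the second ingredient. A naive direct approach would take a hypothetical disconnection $A \cap B = K_1 \cup K_2$ with $d(K_1, K_2) > \delta > 0$ and try to contradict the unicoherence of an approximating tree $T$ via an $\epsilon$-map $f : X \to T$ with $\epsilon \ll \delta$. The delicate point is that $f(A \cap B) \subseteq f(A) \cap f(B)$ with strict containment in general, so the unicoherence of $f(A) \cap f(B)$ in $T$ does not immediately transfer the separation of $K_1, K_2$ to a separation in $T$. The inverse-limit reformulation cleanly bypasses this by working at the limit level, where intersections behave functorially under the bonding projections. I expect this step to carry the bulk of the technical weight, as it is where the real content of Nadler's Theorem 10.50 lies.
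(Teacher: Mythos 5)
This statement is quoted by the paper from Nadler's book (\cite[10.50]{Na}) and is not proved in the text, so there is no internal argument to compare yours against; I can only assess your sketch on its own terms. Your overall plan is sound and is essentially the standard route: the forward direction via first-point retractions onto finite subtrees, and the converse via hereditary unicoherence, using that a locally connected continuum is a dendrite iff it is hereditarily unicoherent and that tree-like continua are hereditarily unicoherent. The inverse-limit argument you propose for the latter does work: writing $X=\varprojlim (T_i,f_i)$ with trees $T_i$, one has $A=\varprojlim \pi_i(A)$ for every closed $A\subset X$, hence $A\cap B=\varprojlim\bigl(\pi_i(A)\cap\pi_i(B)\bigr)$; each $\pi_i(A)\cap\pi_i(B)$ is a nonempty subcontinuum of the tree $T_i$ (trees are hereditarily unicoherent), and an inverse limit of continua is a continuum, so $A\cap B$ is connected. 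This is exactly how the difficulty you flag (the non-surjectivity of $f(A\cap B)\subset f(A)\cap f(B)$ under a mere $\epsilon$-map) is bypassed.

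The one place where your sketch as written is too quick is the fiber estimate in the dendrite $\Rightarrow$ tree-like direction. Knowing only that $x$ lies within $\epsilon/4$ of some $f\in F\subset T$ does not by itself bound $d(x,r(x))$: metric proximity to the tree says nothing about the distance to the \emph{first} point of the tree along $[x,p]$. What saves the argument is that a dendrite is a Peano continuum, hence uniformly locally arcwise connected: for every $\eta>0$ there is $\delta>0$ such that any two points at distance $<\delta$ are joined by an arc of diameter $<\eta$ (alternatively, equip the dendrite with a convex metric). Choosing the finite net $F$ to be a $\delta$-net rather than an $\epsilon/4$-net, and observing that $[x,r(x)]\subset [x,f]$ because every arc from $x$ to a point of $T$ passes through $r(x)$, gives $d(x,r(x))\le \operatorname{diam}[x,f]<\eta$, whence the fibers of $r$ have diameter at most $2\eta\le\epsilon$. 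With this repair (and the standard continuity of the first-point retraction), your first direction is complete, so the proposal is correct modulo this easily patched imprecision.
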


\begin{thm}[{\cite[Theorem 1.1]{SY}}]
Any action of a countable amenable group on a dendrite has a periodic point of
order $\leq 2$.
\end{thm}

\subsection{Rays and lines}
\begin{defin}
Let $X$ be a compact metric space. If
 $\phi:[0, +\infty)\rightarrow X$ is a continuous injection, then
 $\phi$ or its image $R\equiv\phi([0,+\infty))$ is called a {\it ray} in $X$;
  $R$ or $\phi$ is called {\it oscillatory} (resp.
{\it nonoscillatory}) if $\cap_{n=0}^\infty \overline {\phi([n,
+\infty))}$ contains at least two points (resp. only one point). If
 $\psi:(-\infty, +\infty)\rightarrow X$ is a continuous injection, then $\psi$ or its image
 $L\equiv\psi((-\infty,+\infty))$ is called a {\it line} in $X$;
$L$ or $\psi$ is called {\it oscillatory}  if either
$\cap_{n=0}^\infty \overline {\psi((-\infty, -n])}$ or
$\cap_{n=0}^\infty \overline {\psi([n, +\infty))}$ contains at least
two points;  is called {\it bi-sided-oscillatory} if both
$\cap_{n=0}^\infty \overline {\psi((-\infty, -n])}$  and
$\cap_{n=0}^\infty \overline {\psi([n, +\infty))}$ contain at least
two points; is called {\it one-sided-oscillatory} if it is
oscillatory but not bi-sided-oscillatory;  is called ${\it
nonoscillatory}$ if it is not oscillatory.
\end{defin}
 We should note that if $\phi_1$ and $\phi_2$ are two rays
 with $\phi_1([0,+\infty))=\phi_2([0,+\infty))$, then $\phi_1$
 and $\phi_2$ have the same types
of oscillation. The same conclusion is true for lines. One may
consult \cite{MS} for more information about rays (called ``{\it quasi-arc}" in \cite{MS}).

\begin{defin} Let $X$ be a compact metric space. Let  $R$ be a ray
in $X$ and let $L$ be a line in $X$. We say that $L$ is an {\it
extension} of $R$ if there is a continuous injection $\phi:(-\infty,
+\infty)\rightarrow X$ such that $L=\phi((-\infty, +\infty))$ and
$R=\phi([0, +\infty))$.
\end{defin}

The following lemma will be used in the proof of the main result.
\begin{lem}
Let $X$ be a uniquely arcwise connected continuum. Let $R$ be a ray
in $X$ and let $\phi:[0, +\infty)\rightarrow X$ be a continuous
injection such that $R=\phi([0,+\infty))$. If there is an arc $[a,
b]$ in $X$ such that $\phi(0)\in (a, b)$, then $R$ can be extended
to a line $L$ in $X$. {Moreover, there is a maximal line extending $R$.}
\end{lem}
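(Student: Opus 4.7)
\medskip
\noindent\textbf{Proof plan.} Let $q=\phi(0)$. Since $q\in(a,b)$, the subarcs $[a,q]$ and $[q,b]$ of $[a,b]$ satisfy $[a,q]\cup[q,b]=[a,b]$ and $[a,q]\cap[q,b]=\{q\}$. My strategy is to extend $\phi$ to negative parameters by running along one of these subarcs away from $q$, using a logarithmic reparameterization so that the domain stretches to $-\infty$. For this to produce a continuous injection, I must first confirm that at least one of the subarcs $(a,q]$ or $[q,b)$ meets $R$ only at $q$.

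\medskip
\noindent\emph{Step 1 (the one-sided claim).} I will show that either $R\cap(a,q)=\emptyset$ or $R\cap(q,b)=\emptyset$. For any $s>0$, the restriction $\phi|_{[0,s]}$ is a continuous injection from a compact interval into the Hausdorff space $X$, hence a homeomorphism onto its image; so $\phi([0,s])$ is an arc with endpoints $q$ and $\phi(s)$. By unique arcwise connectedness, $\phi([0,s])=[q,\phi(s)]$. Consequently, if $\phi(s)\in(q,b)$, then $[q,\phi(s)]\subset[q,b]$, and thus $\phi([0,s])\subset[q,b]$; similarly, $\phi(s)\in(a,q)$ gives $\phi([0,s])\subset[a,q]$. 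Now suppose, for contradiction, that there exist $s_0,s_1>0$ with $\phi(s_0)\in(a,q)$ and $\phi(s_1)\in(q,b)$, and assume WLOG $s_0\le s_1$. Then $\phi(s_0)\in\phi([0,s_1])\subset[q,b]$ while also $\phi(s_0)\in[a,q]$, forcing $\phi(s_0)\in[a,q]\cap[q,b]=\{q\}=\{\phi(0)\}$, which contradicts the injectivity of $\phi$.

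\medskip
\noindent\emph{Step 2 (construction of the extension).} WLOG assume $R\cap(a,q)=\emptyset$, so that $R\cap[a,q]=\{q\}$. Fix a homeomorphism $\gamma:[0,1]\to[a,q]$ with $\gamma(0)=a$ and $\gamma(1)=q$, and define $\psi:(-\infty,+\infty)\to X$ by
\[
\psi(t)=\begin{cases}\gamma(e^{t}),& t\le 0,\\ \phi(t),& t\ge 0.\end{cases}
\]
The two formulas agree at $t=0$ (both giving $q$), so $\psi$ is continuous. On $(-\infty,0]$, $t\mapsto e^{t}$ is a homeomorphism onto $(0,1]$, so $\psi$ is an injection onto $(a,q]$; on $[0,+\infty)$, $\psi=\phi$ is an injection onto $R$. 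Since $(a,q]\cap R=\{q\}=\{\psi(0)\}$, the two pieces can only coincide at $t=0$, so $\psi$ is globally injective. Hence $L=\psi((-\infty,+\infty))=(a,q]\cup R$ is a line extending $R$, as required.

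\medskip
\noindent\emph{Main obstacle.} Everything after Step~1 is a bookkeeping concatenation with a logarithmic reparameterization of the finite arc $[a,q]$; the genuine content lies in Step~1. Its proof is short but depends crucially on the uniqueness of arcs: it is this uniqueness that forces the initial segment $\phi([0,s])$ to lie entirely in whichever subarc $\phi(s)$ belongs to, which is what allows the ``pigeonhole'' intersection $[a,q]\cap[q,b]=\{q\}$ to yield a contradiction. In a merely arcwise connected continuum this step would fail, so the hypothesis on $X$ is essential precisely here.
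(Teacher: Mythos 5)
Your proof is correct, and it is essentially the argument the paper has in mind: the paper states this lemma without proof (``The following lemma is clear''), and your Step 1 (unique arcwise connectedness forces every initial segment $\phi([0,s])=[q,\phi(s)]$ to lie in whichever of the two subarcs $[a,q]$, $[q,b]$ contains $\phi(s)$, so $R$ can meet at most one of $(a,q)$, $(q,b)$) followed by prepending the untouched half-arc with a reparameterization is exactly the intended filling-in of that claim. The only cosmetic remark is that you assert $R\cap[a,q]=\{q\}$ where you have only shown $R\cap(a,q]=\{q\}$, but that is all your injectivity argument uses, so nothing is missing.
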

\begin{proof} { The first statement is clear. To show the second one,
let $$\mathcal{F}=\{L: L\ \text{is a line which extends}\ R\}.$$ It is clear that
$\mathcal{F}\not=\emptyset$. $\mathcal{F}$ is a partially ordered set with respect to the inclusion of sets.

Assume that $\{L_\lambda\}_{\lambda\in \Lambda}$ is a totally ordered subset of $\mathcal{F}$.
Set $N=\cup_{\lambda\in \Lambda}L_\lambda.$ We claim that $N$ is a line which extends $R$.

In fact, for any $x\not=y\in N$ define $x<y$ if and only if exactly one of the following items holds: (1) $x,y\in R$
and $[\phi(0),x]\varsubsetneq [\phi(0),y]$; (2) $x,y\not\in R$ and $[x,\phi(0)]\supsetneq [y,\phi(0)]$; (3) $x\not\in R$ and $y\in R$.

Then we can check that $"<"$ is a total ordering on $N$. By the compactness of $X$, for any integer $k>0$,
there exists $a_k\in N\setminus R$ such that
\begin{equation}\label{equ-2-10}
N\subset B([a_k,\phi(0)],\frac{1}{k})\cup R.
\end{equation}
Since $N$ contains no minimal element according to the ordering $<$, we may suppose
$$\ldots<a_3<a_2<a_1<0.$$
Let $\psi: (-\infty,+\infty)\rightarrow N$ be a continuous injection such that
$$\psi([-n,0])=[a_n,\phi(0)],\ \forall n\in\mathbb{N},\ \text{and}\ \psi([0,+\infty))=R.$$
It remains to show $\psi((-\infty,0))=N\setminus R$.

Assume the contrary that there exists
$z\in N\setminus \psi((-\infty,+\infty))$. Then $z<y$ for any $y\in \psi((-\infty,+\infty))$.
Take $z'\in N$ such that $z'<z$ (noting that $N$ contains no minimal element). Then we have $d(z',[z,\phi(0)])>0$.
This contradicts (\ref{equ-2-10}). Thus, the claim is proved.

Applying Zorn's Lemma, we have proved the existence of the maximal line.}
\end{proof}

\begin{exa} In Example 1.1, $S^-$ and $S^+$ are one-sided
oscillatory rays; $S^-\cup S^+$ is a bi-sided-oscillatory line;
$L-{(-1, 1)}$ is a nonoscillatory ray; $L-\{(-1, 1), (-1, -1)\}$
is a nonoscillatory line; the line $S^-\cup S^+$ is an extension of
the ray $S^-$.
\end{exa}

\subsection{Quasi-retractions}

Let $X$ be a uniquely arcwise connected continuum. Let $Y$ be either
a tree, or an oscillatory ray, or a bi-sided oscillatory line
contained in $X$. Then, by the uniquely arcwise connectivity, for
every $x\in X$, there is a unique $y\in Y$ such that $[x, y]\cap
Y=\{y\}$; we denote $y=r_Y(x)$, and call the map $r_Y:X\rightarrow
Y, x\mapsto r_Y(x)$ the {\it quasi-retraction} from $X$ onto $Y$. We
should note that $r_Y$ is not continuous in general. The idea of quasi-retraction
comes from \cite{Moh}.

Here we remark that the quasi-retraction $r_Y$ cannot be defined
for any arcwise connected subset $Y$ of $X$, unless $Y$ satisfies the
the requirement that for every $x\in X$, there is a unique $y\in Y$ with $[x, y]\cap
Y=\{y\}$; this is why we need to assume $Y$ being of some special form as above.

\begin{lem}
Let $X$ be a uniquely arcwise connected continuum. Let $Y$ be either
a tree, or an oscillatory ray, or a bi-sided oscillatory line
contained in $X$. If $Z$ is an arcwise connected subset of $Y$, then
$r_Y^{-1}(Z)$ is an arcwise connected Borel measurable subset of
$X$.
\end{lem}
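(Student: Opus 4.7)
The plan has two steps, corresponding to the two properties to verify. For arcwise connectivity, I would observe that for any $x \in r_Y^{-1}(Z)$ with $y := r_Y(x) \in Z$, the entire arc $[x,y]$ lies in $r_Y^{-1}(Z)$: for any $w \in [x,y]$, the sub-arc $[w,y]$ is contained in $[x,y]$, so $[w,y] \cap Y \subseteq [x,y] \cap Y = \{y\}$, forcing $r_Y(w) = y \in Z$. Combined with the arcwise connectedness of $Z$ (which sits inside $r_Y^{-1}(Z)$), any two points $x_1, x_2 \in r_Y^{-1}(Z)$ are joined by the concatenation of $[x_1, r_Y(x_1)]$, an arc in $Z$ from $r_Y(x_1)$ to $r_Y(x_2)$, and $[r_Y(x_2), x_2]$, all lying inside $r_Y^{-1}(Z)$.

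For the Borel measurability, my strategy is to exploit the fact that $r_Y$ is constant on each arcwise-connected component of $X \setminus Y$: two points of such a component are joined by an arc avoiding $Y$, and the unique arcwise connectedness of $X$ then forces their retractions to coincide. Writing $\mathcal{C}$ for the family of arcwise components of $X \setminus Y$ and $y_C := r_Y(C) \in Y$ for the common retraction, this yields the decomposition
\[
r_Y^{-1}(Z) \;=\; Z \;\cup\; \bigcup \bigl\{\, C \in \mathcal{C} : y_C \in Z \,\bigr\}.
\]
I then plan to verify three points: (a) $Z$ is Borel in $X$, because any arcwise connected subset of the tree/ray/line $Y$ is a countable union $\bigcup_n [a, z_n]$ of sub-arcs, over a fixed basepoint $a \in Z$ and a countable dense $\{z_n\} \subset Z$, hence an $F_\sigma$; (b) each arcwise component $C$ is similarly an $F_\sigma$ (write $C = \bigcup_n [c_0, c_n]$ for $c_0 \in C$ and a countable dense $\{c_n\} \subset C$), and thus Borel in $X$; (c) the family $\mathcal{C}$ contributing to the union is at most countable.

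The main obstacle I expect is (c), the countability of $\mathcal{C}$. My plan is to use the separability of $X$ combined with the structure of $Y$ --- specifically, that at any point of $Y$ only countably many components of $X \setminus Y$ can attach, which one extracts via a local separability argument together with unique arc connectedness of $X$. With (a)--(c) established, the displayed decomposition realizes $r_Y^{-1}(Z)$ as a countable union of Borel subsets of $X$, completing the proof.
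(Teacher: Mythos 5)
Your argument for arcwise connectivity is correct and is exactly the paper's: if $x\in r_Y^{-1}(Z)$ then $[x,r_Y(x)]\subset r_Y^{-1}(Z)$, and since $Z\subset r_Y^{-1}(Z)$ is arcwise connected, any two points can be joined inside $r_Y^{-1}(Z)$.

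The measurability half, however, has a genuine gap, and it is located precisely where you expected trouble: step (c). The claim that at each point of $Y$ only countably many arc components of $X\setminus Y$ can attach is false for general uniquely arcwise connected continua. Take $X$ to be the Cantor fan, i.e.\ the cone over the Cantor set with vertex $v$; this is a uniquely arcwise connected continuum (any arc joining two different spokes must pass through $v$), but it is not locally connected. Let $Y$ be a single spoke, which is an arc and hence a tree. Then the arc components of $X\setminus Y$ are the uncountably many punctured spokes, and every one of them satisfies $y_C=v$. So $\mathcal{C}$ is uncountable, and no separability argument can rescue countability: the intuition that ``only countably many components attach at a point'' comes from local connectedness (components of open subsets of a dendrite are open, hence countable in number), which is exactly the hypothesis that is absent here. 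A Cantor-comb continuum $([0,1]\times\{0\})\cup(K\times[0,1])$ with $Y$ the base shows moreover that uncountably many distinct points of $Y$ can simultaneously carry nontrivial fibers. Step (b) suffers from the same phenomenon: if an arc component $C$ of $X\setminus Y$ itself contains a Cantor fan, then $\bigcup_n [c_0,c_n]$ over a countable dense subset of $C$ meets only countably many spokes and is a proper subset of $C$, so the representation of $C$ as such a countable union of arcs is false in general (your step (a), for subsets of the tree/ray/line $Y$ itself, is fine). Note that in these examples the conclusion of the lemma still holds; it is only the decomposition into countably many Borel pieces that breaks down.

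This is precisely the point the paper does not reprove: for measurability it simply cites Mohler \cite{Moh}, where the issue is handled by a different and more delicate argument, not by counting arc components of $X\setminus Y$. To close the gap you would need either to reproduce that argument or to take another route (for instance, establishing measurability of such quasi-retraction preimages directly, rather than writing them as countable unions over arc components), since the countable-decomposition strategy cannot work without local connectedness.
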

\begin{proof}
From  the definition of $r_Y$, we see that if $x\in r_Y^{-1}(Z)$,
then $[x, r_Y(x)]\subset r_Y^{-1}(Z)$, that is every point in
$r_Y^{-1}(Z)$ is connected to a point in $Z$ by an arc in
$r_Y^{-1}(Z)$. Since $Z\subset r_Y^{-1}(Z)$ and $Z$ is arcwise
connected, we know $r_Y^{-1}(Z)$ is arcwise connected. For the
measurability of $r_Y^{-1}(Z)$, one may consult \cite{Moh}.
\end{proof}

\begin{exa}
In Example 1.1, if $Y$ is the tree $L\cup B\cup R\cup M$, then
$r_Y^{-1}((0,0))=S^-\cup S^+$; if $Y$ is the line $S^-\cup S^+$,
then $r_Y^{-1}((0,0))=L\cup B\cup R\cup M$; if $Y$ is the ray
$S^-$, then $r_Y^{-1}((0,0))=L\cup B\cup R\cup M\cup S^+$.
\end{exa}

\section{Induced actions on dendrites}
{In this section we will introduce the notions of the convex metrics and their completions; and
the induced actions.}
\subsection{Convex metrics and their completions}
Let $X$ be a uniquely arcwise connected space (need not be compact).
A metric $d$ on $X$ is {\it convex}, if for any $u, v, x, y\in X$
with $[u, v]\subset [x, y]$, we have $d(u, v)\leq d(x, y)$. Suppose
$T_1\subsetneq T_2\subsetneq T_3\subsetneq...$ is a strictly
increasing sequence of trees contained in $X$. Let
$T=\cup_{i=1}^\infty T_i$. Then $T$ is an arcwise connected subset
of $X$. Clearly, $T$ is also  the union of infinitely many arcs
$I_i$  $(i=1, 2, 3,...)$ with $I_i\cap I_j$ being a point or empty
for any $i\not=j$. Without loss of generality, we may suppose that
$$T_n=\cup_{i=1}^nI_i$$
 for each $n\in \mathbb{N}$. Fix a homeomorphism
$h_i:I_i\rightarrow [0, 1]$ for each $i$. If $[a, b]\subset [0, 1]$,
we denote by $l([a, b])$ the length of the interval $[a, b]$ under
the Euclidean metric on $[0 ,1]$, i.e., $l([a, b])=|a-b|$. For $x,
y\in T$, define
\begin{equation}
d(x, y)=\sum_{i=1}^\infty \frac{1}{2^i}l(h_i([x, y]\cap I_i)).
\end{equation}
It is direct to check that $d$ is a convex metric on $T$. Let
$\widetilde T$ be the completion of $T$ with respect to the metric
$d$. We still use $d$ to denote the naturally induced metric on
$\widetilde T$.

\begin{prop}
$(\widetilde T, d)$ is a dendrite.
\end{prop}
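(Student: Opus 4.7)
The plan is to verify that $(\overline T, d)$ satisfies the hypotheses of Theorem~2.6, namely that it is a locally connected, tree-like continuum. The central tool is a family of \emph{projections} $p_n \colon T \to T_n$ sending $x \in T$ to the unique $y \in T_n$ with $[x, y] \cap T_n = \{y\}$; these are well defined because any $T_k \supseteq T_n$ containing $x$ is a finite tree in which $T_n$ is a subtree. A brief analysis (splitting into the cases $p_n(x) = p_n(y)$ and $p_n(x) \ne p_n(y)$, and in the latter decomposing $[x,y]$ as $[x, p_n(x)] \cup [p_n(x), p_n(y)] \cup [p_n(y), y]$) shows $[p_n(x), p_n(y)] \subseteq [x, y]$ in $T$, so convexity of $d$ forces each $p_n$ to be $1$-Lipschitz. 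Moreover $[x, p_n(x)]$ meets $T_n$ only at $p_n(x)$, hence lies in $\bigcup_{i > n} I_i$, which gives $d(x, p_n(x)) \leq \sum_{i > n} 2^{-i} = 2^{-n}$.

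The coarse structure of $\overline T$ then drops out. Each $(T_n, d|_{T_n})$ is a compact finite metric tree of total length less than $1$, and the above estimate says $T_n$ is a $2^{-n}$-net for $T$, so $T$ is totally bounded; being complete, $\overline T$ is compact. Connectedness of $\overline T$ is inherited from the arcwise connectedness of $T$. Each $p_n$ extends by $1$-Lipschitz continuity to $\overline p_n \colon \overline T \to T_n$ with $d(z, \overline p_n(z)) \leq 2^{-n}$ for all $z \in \overline T$; consequently every fiber of $\overline p_n$ has diameter at most $2 \cdot 2^{-n}$. Since $\overline p_n$ is surjective (as $T_n \subseteq T \subseteq \overline T$ with $\overline p_n|_{T_n} = \operatorname{id}$), it is a $(2^{-n+1})$-map onto a tree, exhibiting $\overline T$ as tree-like.

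The main obstacle is local connectedness. Given $x \in \overline T$ and $\varepsilon > 0$, pick $n$ large and a connected open neighborhood $W$ of $\overline p_n(x)$ in $T_n$ of sufficiently small $d$-diameter, and set $V := \overline p_n^{-1}(W)$. The fiber estimate places $V$ inside the $\varepsilon$-ball about $x$. For the connectedness of $V$ one uses that in any tree, a connected subset is arc-convex, so $W$ contains the $T_n$-arc between any two of its points. For $x_1, x_2 \in V \cap T = p_n^{-1}(W)$, the $T$-arc $[x_1, x_2]$ decomposes, via the tripod structure in the uniquely arcwise connected ambient space $X$, into branch segments on which $p_n$ is constant and, when $p_n(x_1) \ne p_n(x_2)$, a middle segment $[p_n(x_1), p_n(x_2)] \subseteq W$; in either case $[x_1, x_2] \subseteq V$. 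Hence $V \cap T$ is arcwise connected. Since $T$ is dense in $\overline T$ and $V$ is open, $V \cap T$ is dense in $V$, and a set lying between a connected set and its closure is connected, so $V$ is connected. Local connectedness is established, and Theorem~2.6 concludes that $\overline T$ is a dendrite.
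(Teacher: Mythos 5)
Your proposal is correct and follows the same overall skeleton as the paper: both verify compactness, local connectedness and tree-likeness of $(\overline T,d)$ and then invoke Theorem~2.6, with the quasi-retraction onto $T_n$ (your $p_n$ is exactly $r_{T_n}$) and the tail estimate $\sum_{i>n}2^{-i}$ as the key tools. The differences are in how two of the three claims are executed. For tree-likeness, the paper extends $r_{T_n}$ by uniform continuity and then proves the $\epsilon$-map property by contradiction, approximating points of $\overline T$ by points of $T$ with an $\epsilon/5$ bookkeeping; you instead observe that the pointwise bound $d(x,p_n(x))\leq 2^{-n}$ passes to the completion by density, so every fiber of $\overline p_n$ has diameter at most $2^{-n+1}$ outright --- a cleaner, quantitative version of the same idea. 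For local connectedness, the paper covers $\overline T$ by the closures of the finitely many sets $D_i=r_{T_n}^{-1}(C_i)$ (preimages of small arcs, connected by Lemma~2.12) and appeals to the criterion \cite[8.4]{Na}; you build an explicit base of small connected open neighborhoods $V=\overline p_n^{-1}(W)$, proving connectedness of $V$ from arc-convexity of $W$ in the tree $T_n$ plus density of $V\cap T$ in the open set $V$. Your route avoids citing \cite[8.4]{Na} at the cost of the tripod analysis of $[x_1,x_2]$, which is the same structural fact underlying Lemma~2.12. One small imprecision: when $p_n(x)=p_n(y)$ the inclusion $[p_n(x),p_n(y)]\subset[x,y]$ can fail (the common image need not lie on $[x,y]$), but in that case $d(p_n(x),p_n(y))=0$, so the $1$-Lipschitz conclusion you actually use is unaffected.
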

\begin{proof}
For every
$\epsilon>0$, there is $n\in \mathbb{N}$ such that
\begin{equation}
\sum_{i=n+1}^\infty\frac{1}{2^i}<\frac{\epsilon}{5}.
\end{equation} Noting that
for every $\epsilon'>0$, by the convexity of $d$, we always have
$$d(r_{T_n}(x), r_{T_n}(y))\leq d(x, y)<\epsilon',$$
whenever $x, y\in T$ with $d(x, y)<\epsilon'$. This shows that
$r_{T_n}: (T, d)\rightarrow (T_n, d)$ is uniformly continuous. So,
$r_{T_n}$ can be extended to a continuous map
\begin{equation}
r_\epsilon:(\widetilde T, d)\rightarrow (T_n, d).
\end{equation}
We claim that $r_\epsilon$ is an $\epsilon$-map. Otherwise,
there are $x, y\in\widetilde T$ with $d(x, y)>\epsilon$ and
$r_\epsilon(x)=r_\epsilon(y)$. Then by the continuity of
$r_\epsilon$ and the density of $T$ in $\widetilde T$, there are $x',
y'\in T$ such that $d(x, x')<\frac{\epsilon}{5}$, $d(y,
y')<\frac{\epsilon}{5}$, and $d(r_{\epsilon}(x'),
r_{\epsilon}(y'))<\frac{\epsilon}{5}$. So, by (3.2), we have
$$ d(x,
y)\leq d(x, x')+d(x',
r_\epsilon(x'))+d(r_\epsilon(x'),r_\epsilon(y'))+d(r_\epsilon(y'),y')+d(y,y')<\epsilon,
$$ which is a contradiction. By the arbitrariness of $\epsilon$, we
get simultaneously that  $(\widetilde T, d)$ is totally bounded and hence compact; is locally connected since $r_\epsilon$
is monotone by Lemma 2.12 (see \cite[8.4]{Na}); is tree-like. It follows from  Theorem 2.6 that $(\widetilde T, d)$ is a
dendrite.

\end{proof}

\begin{rem}
Though the inclusion $i : (T,d) \rightarrow X$ is a continuous injection,
it is not necessarily an embedding. The topology on $(T,d)$ is the weak topology on $\{T_n\}$. That is, $A\subset T$ is $d$-closed
if $A\cap T_n$ is closed for every $n$.
\end{rem}

\begin{exa}
In Example 1.1, if we let $T_n=B\cup M\cup_{i=1}^n(I_i\cup
I_{-i}\cup J_i\cup J_{-i})$ and let $T=\cup_{n=1}^\infty T_n$, then
the completion of $T$ with respect to the metric $d$ defined above
is homeomorphic to the tree `` H ".
\end{exa}

\subsection{Induced actions}
 Let $X$ be a uniquely arcwise connected continuum. Let
$G$ be a countable group acting on $X$. Suppose $G=\{g_i:
i=1, 2, 3, ...\}$. Take a point $p\in X$. For each positive integer
$n$, let $S_n=\{g_i(p):i=1,...,n\}$ and let $T_n=[S_n]$. Then we get
an increasing sequence of trees:
\begin{equation}
T_1\subset T_2\subset T_3\subset ... .
\end{equation}
 Set $T=\cup_{n=1}^\infty T_n \ (=[Gp])$. Then $T$ is a $G$-invariant uniquely
arcwise connected subset of $X$. We assume that $T$ is not a tree.
Then by deleting some $T_i$'s in (3.4) and renumbering  the
remaining $T_i$'s, we can assume that the sequence in (3.4) is
strictly increasing. It follows from Proposition 3.1 that the
completion $(\widetilde T, d)$ of $T$ with respect to the metric $d$
defined in (3.1) is a dendrite.

\begin{prop}
For each $g\in G$, the homeomorphism $g$ on $(T,d)$ is uniformly continuous with respect to
the metric $d$.
\end{prop}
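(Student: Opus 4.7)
The plan is to fix $g\in G$ and $\epsilon>0$ and show that $g:(T,d)\to(T,d)$ is $(\delta,\epsilon)$-uniformly continuous for a suitable $\delta>0$. The idea is to split the series $d(g(x),g(y))=\sum_{i}2^{-i}\,l(h_i([g(x),g(y)]\cap I_i))$ into a tail $i>N$, whose total contribution is automatically at most $\sum_{i>N}2^{-i}<\epsilon/2$ once $N$ is chosen large enough (using $l(h_i(\cdot))\le 1$), and a finite head $i\le N$, which will be controlled by analysing how $g^{-1}$ acts on each of the edges $I_1,\dots,I_N$.

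For each fixed $i\le N$, since $T=[Gp]$ is $G$-invariant, $g^{-1}(I_i)\subset T$, and since $g$ is a homeomorphism of $X$, $g^{-1}(I_i)$ is an arc. Its two endpoints lie in some $T_{M_i}=I_1\cup\cdots\cup I_{M_i}$, so by uniqueness of arcs in $X$ the whole arc $g^{-1}(I_i)$ sits in the tree $T_{M_i}$. A standard tree argument---if the arc were to leave an edge $I_j$ and later re-enter it, two distinct arcs would join the exit and re-entry points---shows that $g^{-1}(I_i)=A_{i,1}\cup\cdots\cup A_{i,k_i}$, where each $A_{i,\ell}$ is a subarc of some $I_{j(i,\ell)}$ with $j(i,\ell)\le M_i$. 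Conjugating $g|_{A_{i,\ell}}$ by the parametrizations $h_{j(i,\ell)}$ and $h_i$ produces a homeomorphism $\phi_{i,\ell}$ between subintervals of $[0,1]$, which is uniformly continuous on that compact subinterval; pick $\delta_{i,\ell}>0$ so that $\phi_{i,\ell}$ carries every subinterval of length less than $\delta_{i,\ell}$ into a set of length less than $\epsilon/(2k_i)$.

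Setting $\delta=\min_{i\le N,\,\ell\le k_i}2^{-j(i,\ell)}\delta_{i,\ell}>0$ and assuming $d(x,y)<\delta$, one reads off from (3.1) the pointwise estimate $l(h_{j(i,\ell)}([x,y]\cap I_{j(i,\ell)}))\le 2^{j(i,\ell)}d(x,y)<\delta_{i,\ell}$ for every relevant pair. Combining $[g(x),g(y)]=g([x,y])$ with the decomposition above gives $[g(x),g(y)]\cap I_i=\bigcup_\ell g([x,y]\cap A_{i,\ell})$, and so $l(h_i([g(x),g(y)]\cap I_i))\le\sum_\ell l(\phi_{i,\ell}(h_{j(i,\ell)}([x,y]\cap A_{i,\ell})))<k_i\cdot\epsilon/(2k_i)=\epsilon/2$. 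Multiplying by $2^{-i}$, summing over $i\le N$, and adding the tail bound yields $d(g(x),g(y))<\epsilon$.

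The main obstacle is the structural claim in the second paragraph, namely that $g^{-1}(I_i)$ meets only finitely many of the edges $I_j$, each in a single connected subarc. This combines three ingredients---$G$-invariance of $T$, the cofinal exhaustion of $T$ by the trees $T_n$, and the uniqueness of arcs in $X$ (used both to confine $g^{-1}(I_i)$ to $T_{M_i}$ and to preclude re-entries into an edge). Once that is in place, the remaining estimates reduce to uniform continuity of finitely many homeomorphisms on compact subintervals of $[0,1]$, and one should note that this argument gives uniform continuity of each individual $g$, not a joint modulus in $g$ (which is what the statement claims).
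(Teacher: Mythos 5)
Your proof is correct, but it takes a genuinely different route from the paper's. The paper fixes $g$ and $\epsilon$, picks $m$ with small series tail, then chooses $n$ so large that $S_n\supset S_m\cup g^{-1}S_m$, hence $g(T_n)\cap T_n\supset T_m$; it uses uniform continuity of $g$ on the compact tree $T_n$ together with the quasi-retraction $r_{T_n}$ and the convexity of $d$ (so $d(r_{T_n}(u),r_{T_n}(v))\le d(u,v)$), and controls the ``off-tree'' terms $d(g(u),g(r_{T_n}(u)))$ by the tail $\sum_{i>m}2^{-i}$, since $g([u,r_{T_n}(u)])$ meets $T_m$ in at most one point. You instead work edge by edge: for each of the first $N$ edges you confine $g^{-1}(I_i)$ to a tree $T_{M_i}$ (endpoints in $T$ plus uniqueness of arcs), decompose it into finitely many subarcs of edges (the intersection of two arcs in a uniquely arcwise connected continuum is connected, which also justifies your re-entry argument), and reduce everything to uniform continuity of the finitely many interval homeomorphisms $h_i\circ g\circ h_{j(i,\ell)}^{-1}$, reading all length estimates directly off the definition (3.1). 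Your version is more self-contained in one respect: the paper's compactness step tacitly uses that $d$ restricted to $T_n$ is compatible with the topology $T_n$ carries from $X$, whereas your estimates never leave the metric $d$ and the standard topology of $[0,1]$; the paper's version is shorter and reuses machinery ($r_{T_n}$, convexity) already deployed elsewhere (e.g., in Claim C of Proposition 3.1). One small correction to your closing remark: the statement is not claiming a modulus uniform in $g$ --- the paper's own proof also fixes $g\in G$ and proves uniform continuity of each individual homeomorphism, which is exactly what is needed to extend each $g$ to $\overline T$, so your proof establishes the same assertion as the paper's.
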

\begin{proof}
Let $g\in G$. For every $\epsilon>0$, there is $m$ such that
\begin{equation}
\sum_{i=m+1}^\infty\frac{1}{2^i}<\frac{\epsilon}{3}.
\end{equation}
Take a sufficiently large $n$ so that $S_n\supset S_m\cup
g^{-1}S_m$. Then $g(S_n)\cap S_n\supset S_m$ and
\begin{equation}
g(T_n)\cap T_n\supset T_m.
\end{equation}
By the compactness of $T_n$, there is $\delta>0$ such that
\begin{equation}
d(g(x),g(y))<\frac{\epsilon}{3},
\end{equation}
whenever $x, y\in T_n$ with $d(x, y)<\delta$.

For any $u, v\in T$, let $u'=r_{T_n}(u)$ and $v'=r_{T_n}(v)$. Then,
by the convexity of $d$, $d(u', v')<\delta$ whenever $d(u,
v)<\delta$. Then by (3.5) (3.6) and (3.7) we have
$$
d(g(u), g(v))\leq d(g(u), g(u'))+d(g(u'), g(v'))+d(g(v'),
g(v))<\epsilon,
$$
provided that $d(u, v)<\delta$. This completes the proof.

\end{proof}

From Proposition 3.4, we know that  every $g\in G$ can uniquely be extended
to a continuous map $\overline g: (\widetilde T, d)\rightarrow
(\widetilde T, d)$. It follows that such an extension of $g^{-1}$ is an inverse to $\overline{g}$, whence:

\begin{prop}
For each $g\in G$, $\overline g: (\widetilde T, d)\rightarrow
(\widetilde T, d)$ is a homeomorphism.
\end{prop}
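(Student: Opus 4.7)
The plan is to extend the same density-and-uniform-continuity machinery used to build $\overline{g}$ in the first place, but now applied simultaneously to $g$ and its inverse $g^{-1}$, and then to verify that the two extensions compose to the identity.

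First, I would apply Proposition 3.4 not only to $g$ but also to the group element $g^{-1}\in G$, obtaining a continuous extension $\overline{g^{-1}}:(\overline{T},d)\to(\overline{T},d)$ of the uniformly continuous map $g^{-1}:(T,d)\to(T,d)$. Both $\overline{g}$ and $\overline{g^{-1}}$ are well defined because the compact space $(\overline{T},d)$ from Proposition 3.1 is complete, so any uniformly continuous map from the dense subspace $T$ into $\overline{T}$ extends uniquely and continuously to all of $\overline{T}$.

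Next, I would consider the two compositions $\overline{g}\circ\overline{g^{-1}}$ and $\overline{g^{-1}}\circ\overline{g}$. Each is a continuous self-map of $\overline{T}$, and by construction each agrees on the dense subset $T\subset\overline{T}$ with the corresponding composition $g\circ g^{-1}=\mathrm{id}_T$ or $g^{-1}\circ g=\mathrm{id}_T$. Since $\mathrm{id}_{\overline{T}}$ is also a continuous map on $\overline{T}$, and two continuous maps from $\overline{T}$ to the Hausdorff space $\overline{T}$ that agree on the dense set $T$ must coincide, we conclude
\begin{equation*}
\overline{g}\circ\overline{g^{-1}}=\mathrm{id}_{\overline{T}}=\overline{g^{-1}}\circ\overline{g}.
\end{equation*}
Hence $\overline{g}$ has a two-sided continuous inverse, namely $\overline{g^{-1}}$, and is therefore a homeomorphism of $(\overline{T},d)$.

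I do not anticipate a serious obstacle: the proof rests entirely on the completeness of $(\overline{T},d)$, the density of $T$, and the symmetric application of Proposition 3.4 to $g$ and $g^{-1}$. The only point worth a brief sanity check is that the extension really takes values in $\overline{T}$ (rather than some larger completion), which is automatic because $g(T)\subset T\subset\overline{T}$ and $\overline{T}$ is closed in itself. Once that is noted, the density argument completes the proof.
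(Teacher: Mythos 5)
Your proof is correct, but it takes a genuinely different route from the paper's. The paper argues injectivity directly: given $x\neq y$ in $\overline T$, it uses the density of $T$ and the arc structure of $\overline T$ to find $x'\neq y'\in T$ with $[x',y']\subset[x,y]$, observes $[\overline g(x'),\overline g(y')]\subset[\overline g(x),\overline g(y)]$, and then invokes the \emph{convexity} of $d$ to conclude $d(\overline g(x),\overline g(y))\geq d(g(x'),g(y'))>0$; surjectivity and continuity of the inverse are then implicit consequences of the compactness of $\overline T$ (the image $\overline g(\overline T)$ is compact and contains the dense set $g(T)=T$, and a continuous bijection of a compact metric space is a homeomorphism). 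You instead exploit the group structure: Proposition 3.4 applies to $g^{-1}\in G$ just as well, so $\overline{g^{-1}}$ exists, and the standard dense-agreement argument for continuous maps into a Hausdorff space gives $\overline g\circ\overline{g^{-1}}=\overline{g^{-1}}\circ\overline g=\mathrm{id}_{\overline T}$. Your approach avoids the convexity of $d$ entirely and produces the two-sided continuous inverse in one stroke, so surjectivity and continuity of the inverse need no separate appeal to compactness; the paper's approach is more geometric and would still establish injectivity even without using that the inverse map acts uniformly continuously, but since $G$ is a group your symmetric argument is fully available and arguably the more economical one.
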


From Proposition 3.4 and Proposition 3.5, we obtain an action of $G$
on the dendrite $(\widetilde T, d)$ by homeomorphisms, which is
called the {\it induced action} from the $G$-action on $T$.

\section{Proof of the main theorem}
In this section, we start to prove the main result of the paper, namely Theorem 1.2. Let $X$ be a
uniquely arcwise connected continuum and let $G$ be a countable
amenable group. We want to show that every  $G$-action on $X$ has
a periodic point of order $\leq 2$.

 Fix a point $p\in X$. Let $T=[Gp]$ be the convex hull of its orbit. Then $T$ is
an arcwise connected $G$-invariant subset of $X$. If $T$ is a tree,
then $G$ has a periodic point of order $\leq 2$ in $T$ by Theorem 2.7.
So, we may as well assume that $T$ is not a
tree. Thus by the discussion in Section 3, there is a metric $d$ on
$T$ such that the completion $(\widetilde T, d)$ is a dendrite and
there is an induced $G$-action on $(\widetilde T, d)$ by
homeomorphisms. It follows from Theorem 2.7 that there is a periodic point
$q\in \widetilde T$ of order $\leq 2$. If $q\in T$, then the conclusion of Theorem 1.2 holds,
since $q\in X$.

So, we may assume that $q\in \widetilde T-T$, that is
$q$ is an endpoint of $\widetilde T$. If $q$ is a 2-periodic point of
$G$, then $H\equiv\{g\in G:g(q)=q\}$ is a subgroup of $G$ with index
$2$. Notice that $H$ is also amenable and $q$ is a fixed point of
$H$. In this case, if we can show that $H$ has a fixed point $w\in
X$, then $w$ is a periodic point of $G$ with order $\leq 2$, and
the conclusion of Theorem 1.2 holds. So, to show Theorem 1.2 it remains to prove the
following theorem.

\begin{thm}
If the induced $G$-action on $(\widetilde T, d)$ has a fixed point
$q\in \widetilde T-T$, then $G$ has a fixed point in $X$.
\end{thm}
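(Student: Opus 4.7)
The plan is to transport the fixed endpoint $q \in \overline T - T$ of the induced dendrite action into an honest fixed point of $G$ in $X$ by constructing a ray in $T$ that tends to $q$ in $(\overline T, d)$ and analyzing its behaviour in $X$.

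First, using that $q$ is an endpoint of $\overline T$, I would pick $u_1, u_2, \ldots \in T$ with $u_i \to q$ in $(\overline T, d)$ and with strictly nested arcs $[u_1, q]_{\overline T} \supsetneq [u_2, q]_{\overline T} \supsetneq \cdots$ whose intersection is $\{q\}$, so that $u_{i+1} \in [u_i, q]_{\overline T}$. Concatenating the consecutive arcs $[u_i, u_{i+1}]$ (which as subsets of $T \subset X$ agree with the unique arcs of $X$) yields a continuous injection $\phi : [0, \infty) \to X$ with $\phi(i) = u_i$, i.e., a ray $R = \phi([0, \infty))$ in $X$ whose image tends to $q$ in $(\overline T, d)$ as $t \to \infty$. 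For any $g \in G$, the composition $g \circ \phi$ is another such ray, and since $q$ is an endpoint of the dendrite $\overline T$ any two arcs in $\overline T$ ending at $q$ share a common sub-arc ending at $q$; hence there exist $t_0, s_0 > 0$ with $\phi([t_0, \infty)) = g(\phi([s_0, \infty)))$, giving an increasing reparameterization $\tau : [t_0, \infty) \to [s_0, \infty)$ with $g(\phi(\tau(t))) = \phi(t)$ for every $t \geq t_0$.

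I would then split into two cases following Definition 2.8. If $R$ is non-oscillatory in $X$, then $\phi(t) \to x^* \in X$; passing to the limit in $g(\phi(\tau(t))) = \phi(t)$ as $t \to \infty$, using continuity of $g$ on $X$ and $\tau(t) \to \infty$, gives $g(x^*) = x^*$ for every $g \in G$, so $x^*$ is the required fixed point in $X$.

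The oscillatory case is the heart of the matter and the expected main obstacle. Here the $\omega$-limit set $E = \bigcap_{n \geq 1} \overline{\phi([n, \infty))}^X$ is a proper subcontinuum of $X$ with at least two points; the same tail-coincidence argument shows $E$ is $G$-invariant, but $E$ need not be arcwise connected, let alone a dendrite, so Theorem 2.7 does not apply to $E$ directly. The strategy I would pursue is to extend $R$ to a bi-sided-oscillatory line $L$ in $X$ via Lemma 2.10 (arranging $u_1$ to lie in the interior of an arc of $X$) and then to use the quasi-retraction $r_L$ together with Lemma 2.12 to isolate a $G$-invariant arcwise connected subset of $X$ with enough tree-like structure that the Section~3 completion construction can be re-run to yield a dendrite action to which Theorem 2.7 is applicable. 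The delicate point, where the full strength of amenability should come in, is to rule out an infinite regress and ensure that the fixed point thus produced eventually lies in $X$ itself rather than in yet another added boundary.
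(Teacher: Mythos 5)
Your construction of the ray toward $q$ and your treatment of the non-oscillatory case match the paper's Case 1 (there it is done by taking $\phi([0,+\infty))=[o,q)$ and invoking Corollary 2.5), and your observation that each $g\in G$ carries a tail of the ray onto a tail is essentially the paper's use of Corollary 2.4. The problem is that for the oscillatory case you do not give a proof at all: you only state a plan, namely to extend the ray to a line $L$, pull back along $r_L$, and ``re-run the Section~3 completion construction'' so as to apply Theorem 2.7 to yet another induced dendrite action. That plan has exactly the defect you yourself flag: the new dendrite's fixed point may again be an added boundary point, and nothing you propose rules out the resulting infinite regress. Since the oscillatory case is the entire content of the theorem (the non-oscillatory case was already essentially known), this is a genuine gap, not a technical omission.

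The paper's mechanism in the oscillatory case is different and is where amenability is actually spent: it works with a $G$-invariant Borel probability measure $\mu$ on $X$ itself, not with another completion. Concretely, after fixing $c_1>c_2$ with $g_i^{\pm 1}(\phi([c_2,+\infty)))\subset\phi([c_1,+\infty))$ for a finite generating set, one first splits on whether $\sup_\prec\{g(\phi(c_2)):g\in G\}\cap\phi([c_1,+\infty))$ is attained at a finite parameter (in which case that point is fixed by the generators, since each $g_i$ preserves the order $\prec$ on the tail) or is $+\infty$. In the latter case one forms $M=\cup_{g\in G}g(\phi([c_2,+\infty)))$, extends to a line $\psi$, and uses Lemma 2.12: the sets $r_L^{-1}(Z)$ are arcwise connected and Borel, so $\mu$ can see them. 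In the bi-sided-oscillatory subcase the translates $s_i(r_L^{-1}(R))$ along a sequence with $s_i(\phi(c_2))\to+\infty$ in $\prec$ are nested with empty intersection, forcing $\mu(r_L^{-1}(R))=0$ and contradicting $\mu(K_m)>0$ for some piece of the partition $\{K_n\}$; so that subcase cannot occur. In the one-sided subcase the single limit point $z=\cap_n\overline{\psi((-\infty,-n])}$ must be fixed, since otherwise a similar two-set partition $X=P_t\cup Q_t$ and the same measure argument give a contradiction. Finally, the whole argument is run for finitely generated $G$ and the general case is obtained from the finite intersection property of the closed fixed-point sets $X_F$. None of these steps (the $\sup_\prec$ dichotomy, the measure-theoretic exclusion of bi-sided oscillation, the one-sided analysis, the finitely generated reduction) appears in your proposal, so as it stands the proof is incomplete precisely at its crux.
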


\begin{proof} We divide the proof into two steps.

\medskip
\noindent{\bf Step 1}: We assume first that $G$ is finitely generated with a generator set
$\{g_1,...,g_n\}$ for some  $n\in \mathbb{N}$.

Fix a point $o\in
T$, then $[o, q)\subset T\subset X$. Let $\phi:[0,
+\infty)\rightarrow X$ be a continuous injection with $[o,
q)=\phi([0, +\infty))$. Then $[o, q)$ becomes a ray.

\noindent{\bf Case 1.} $\phi$ is nonoscillatory. Then there is $z\in
X$ such that $z=\cap_{n=1}^\infty{\overline{\phi([n, +\infty))}}$.
{ Since $q$ is fixed by every $g\in G$, we get from Corollary 2.5 a sequence of
$g$ or $g^{-1}$ invariant decreasing sequence $[u_i, q]$ with $\cap_{i=1}^{\infty}[u_i, q]=\{q\}$. So,
$[u_i, q)$ is a sequence of semi-open intervals in $X$, which is $g$ or $g^{-1}$ invariant.
Thus, $z$ as a limit point of $u_i$ in $X$ is also $g$ invariant.}

\noindent{\bf Case 2.} $\phi$ is oscillatory. By Corollary 2.4,
there are $c_2>c_1>0$ such that, for all $i=1,...,n$,
\begin{equation}
g_i(\phi([c_2,+\infty)))\cup g_i^{-1}(\phi([c_2,+\infty)))\subset
\phi([c_1,+\infty)).
\end{equation}
Let $\prec$ be an ordering on $\phi([0,+\infty))$ defined by
$\phi(t)\prec\phi(s)$ if and only if $t<s$, for any $t, s\in
[0,+\infty)$.

First we assume that $z={\rm sup}_\prec\{\{g(\phi(c_2)):g\in G\}\cap
\phi([c_1,+\infty))\}\prec+\infty$. We claim that $z$ is a fixed point of $G$.
In fact, since $e\in G$, we have $\phi(c_2)\preceq z$. By the definition of $z$, we have $g_i^{-1}(z)\preceq z$
for each $1\le i\le n$.
As each $g_i$ preserves the ordering $\prec|_{\phi([c_2,+\infty))}$, the restriction of $\prec$ to
$\phi([c_2,+\infty))$, we have $z\preceq g_i(z)$, which implies that
for each $1\le i\le n$, $g_i(z)=z$.
That is, $z$ is a fixed point of $G$, thus we get the conclusion. So, we may assume
that
\begin{equation}
{\rm sup}_\prec\{\{g(\phi(c_2)):g\in G\}\cap
\phi([c_1,+\infty))\}=+\infty.
\end{equation}
Consider the set $M=\cup_{g\in G}g(\phi([c_2, +\infty)))$. Then $M$
is arcwise connected, since for any $h_1\not=h_2\in G$ there is some
$c'>0$ such that $h_1(\phi([c_2, +\infty)))\cap h_2(\phi([c_2,
+\infty)))\supset \phi([c', +\infty))$ by Corollary 2.4. By Lemma
2.10, {we can take a maximal line $\psi:(-\infty, +\infty)\rightarrow
M\subset X$ with respect to the inclusion relation of subsets such that $\psi([0,+\infty))=\phi([c_2, +\infty))$.} Set
$L=\psi((-\infty, +\infty))$.

{\bf Subcase 2.1.} $\psi$  is bi-sided-oscillatory in $X$. For each
integer $n$, let $L_n=\psi((n,n+1])$ and let $K_n=\{x\in X:
r_L(x)\in L_n\}$. By Lemma 2.12, each $K_n$ is an arcwise connected
Borel measurable set in $X$. Clearly, these $K_n$ form a partition
of $X$. Since $G$ is amenable, there is a $G$-invariant probability
Borel measure $\mu$ on $X$. Suppose $\mu(K_m)>0$ for some integer
$m$. Since $\psi(m)\in M$, there is some $g'\in G$ such that
$g'(\psi(m))\in \phi([c_2,+\infty))$, which implies
$r_L(g'(K_m))\subset \phi((c_2,+\infty))$. Set
$R=\phi((c_2,+\infty))$. Then
\begin{equation}
\mu(r_L^{-1}(R))\geq\mu(g'(K_m))=\mu(K_m)>0.
\end{equation}
 However, by (4.2), we can
take a sequence $s_i\in G$ such that $s_1(\phi(c_2))\prec
s_2(\phi(c_2))\prec s_3(\phi(c_2))\prec...\in R$ and
$s_i(\phi(c_2))\rightarrow+\infty$ as $i\rightarrow\infty$, with
respect to the ordering $\prec$. Then we have
\begin{equation}
0=\mu(\emptyset)=\mu(\cap_{i=1}^\infty
s_i(r_L^{-1}R))=\lim\limits_{i\to\infty}\mu(s_i(r_L^{-1}R))=
\mu(r_L^{-1}R).
\end{equation}
Since (4.3) and (4.4) are contradict to each other, this subcase
does not occur.

{\bf Subcase 2.2.} $\psi$ is one-sided-oscillatory in $X$. Since
$\phi$ is oscillatory, there must exist a point $z\in X$ such that
$z=\cap_{n=1}^{\infty}\overline{\psi((-\infty, -n])}$. If $z$ is a
fixed point of $G$, then the conclusion holds; otherwise, there is
some $\tilde g\in G$ with $\tilde g(z)\not=z$. {Since $\psi$ is maximal,} there is $r\in (-\infty,
+\infty)$ such that
$$
\psi([r, +\infty))=\psi((-\infty, +\infty))\cap \tilde
g(\psi((-\infty, +\infty))).
$$
Denote $w=\psi(r)\in M$. Take $a\in (z, w)$ with $\tilde g(a)\in
(\tilde g(z), w)$. Let $t\in (-\infty, +\infty)$ be such that
$\psi(t)=a$. Set $L'=L\cup \{z\}$, and set $P_t=\{x\in X: r_{L'}(x)\in [z, a]\}$ and $Q_t=\{x\in
X: r_{L'}(x)\in \psi((t, +\infty))\}$ (See Fig.2.). Then, by Lemma
2.12, $P_t$ and $Q_t$ are arcwise connected and Borel measurable,
and $X=P_t\cup Q_t$ (disjoint union). Since $G$ is amenable, there
is a $G$-invariant Borel probability measure $\mu$ on $X$. Then
$1=\mu(X)=\mu(P_t)+\mu(Q_t)$. Noting that $\tilde g(P_t)\subset
Q_t$, we have
$$
\mu(Q_t)\geq \mu(\tilde g(P_t))=\mu(P_t)>0
$$
provided that $\mu(P_t)>0$. Thus we always have $ \mu(Q_t)>0.$ Since
$a\in M$, there is some $g\in G$ such that $g(a)\in \phi([c_2,
+\infty))$. Then, by an argument similar to that in Subcase 2.1, we
get a contradiction.

Altogether, we finish the proof of Theorem 4.1 under the assumption
that $G$ is finitely generated.

\medskip
{\noindent} {\bf Step 2:} Now, suppose that $G$ is not
finitely generated. For any finite subset $F$ of $G$, let $\langle
F\rangle$ be the subgroup of $G$, which is generated by $F$. Define
$$
X_F=\{x\in X: x\ \mbox{is  a fixed point of}\ \langle F\rangle\}.
$$
Then $X_F$ is a nonempty closed subset of $X$. If $F'$ is another
finite subset of $G$, then $X_F\cap X_{F'}=X_{F\cup
F'}\not=\emptyset.$ Thus the family of compact sets $\{X_F: F\
\mbox{is a finite subset of}\ G\}$ has the finite intersection
property. Hence
$$\cap\ \{X_F: F\ \mbox{is finite in}\ G
\}\not=\emptyset, $$ every point of which is a fixed point of $G$.
Thus we complete the proof of Theorem 4.1.
\end{proof}

\begin{figure}[htbp]
\centering
\includegraphics[scale=0.4]{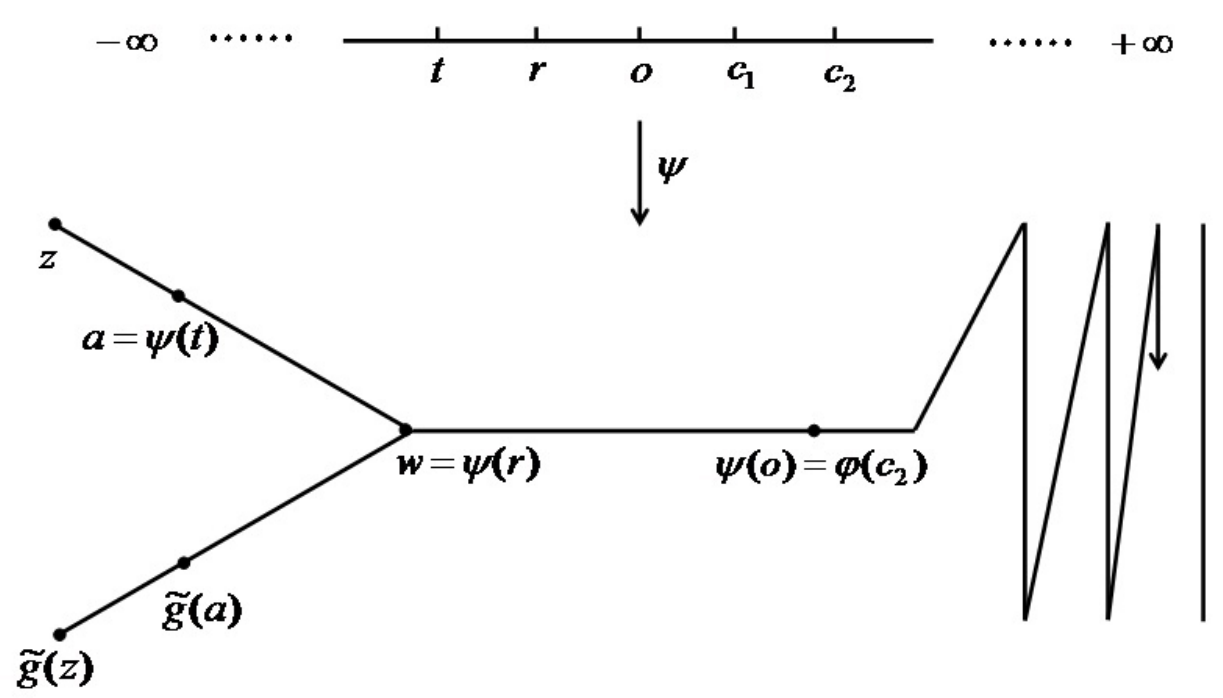}\ \ \ \ \ \ \ \ \ \ \ \ \ \ \ \
\centerline{Fig.2}
\end{figure}

We end this paper with a remark. For any countable group $G$, if $p$ is a periodic point then $[Gp]$ is a tree. It follows that if for some bound $M$ every $G$ action on a tree has a periodic point with period at most $M$ then the same is true of any $G$ action on a uniquely arcwise continuum or else there exists an action of $G$ on such a space with no periodic points. Furthermore, our proof shows that if for some bound $M$ every $G$ action on a dendrite has a periodic point with period at most $M$ then the same is true of any $G$ action on a uniquely arcwise continuum which happens to admit an invariant measure.

\subsection*{Acknowledgements}
We would like to thank
Prof. Hanfeng Li for very helpful suggestions. We are also very thankful to the referee for valuable and thoughtful comments.

The work is supported by NSFC (No. 11771318,
11790274, 11431012).

\end{document}